\def\COMMENT#1{}
\let\COMMENT=\footnote
\def\TASK#1{}
\newdimen\margin   % needed for macros \textdisplay & \ltextdisplay
\def\textno#1&#2\par{%
    \margin=\hsize
    \advance\margin by -4\parindent
           \setbox1=\hbox{\sl#1}%
    \ifdim\wd1 < \margin
       $$\box1\eqno#2$$%
    \else
       \bigbreak
       \hbox to \hsize{\indent$\vcenter{\advance\hsize by -3\parindent
       \sl\noindent#1}\hfil#2$}%
       \bigbreak
    \fi}
\newcommand*{\overrightharpoonup}{\mathpalette{\overarrow@\rightharpoonupfill@}}
\newcommand*{\rightharpoonupfill@}{\arrowfill@\relbar\relbar\rightharpoonup}
\DeclareFontFamily{U}{matha}{\hyphenchar\font45}
\DeclareFontShape{U}{matha}{m}{n}{
      <5> <6> <7> <8> <9> <10> gen * matha
      <10.95> matha10 <12> <14.4> <17.28> <20.74> <24.88> matha12
      }{}
\DeclareSymbolFont{matha}{U}{matha}{m}{n}
\DeclareMathSymbol{\varrightharpoonup}{3}{matha}{"E1}
\newcommand{\diedge}[1]{\overrightharpoonup{#1}}
\newcommand{\sat}{\text{sat}}
\newcommand{\qedclaim}{
\hfill\scalebox{0.6}{$\blacksquare$}\\
}
\newcommand{\inv}[1]{\rotatebox[origin=c]{180}{#1}}
\let\leq\leqslant
\let\geq\geqslant
\let\ge\geqslant
\def\alabel{\upshape({\itshape \!\alph*})}
\def\rmlabel{\upshape({\itshape \roman*\,})}
\def\newparagraph{\phantom{.}\medskip}
\newtheorem{theorem}{Theorem}[section]
\newtheorem{example}[theorem]{Example}
\newtheorem{lemma}[theorem]{Lemma}
\newtheorem{claim}[theorem]{Claim}
\newtheorem{conjecture}[theorem]{Conjecture}
\newtheorem{proposition}[theorem]{Proposition}
\newtheorem*{theorem*}{Theorem}
\newtheorem*{define*}{Definition}
\newtheorem*{example*}{Example}
\newtheorem*{lem*}{Lemma}
\newtheorem*{claim*}{Claim}
\newtheorem*{fact*}{Fact}
\newtheorem*{col*}{Corollary}
\newtheorem*{conj*}{Conjecture}
\let\svthefootnote\thefootnote
\newenvironment{proofclaim}{\removelastskip\penalty55\medskip\noindent{\bf Proof of the claim. }}
\title{The induced saturation problem for posets}
\author{Andrea Freschi,  Sim\'on Piga, Maryam Sharifzadeh and Andrew Treglown}
\thanks{AF: University of Birmingham, United Kingdom, {\tt axf079@bham.ac.uk}
\\ \indent
SP: University of Birmingham, United Kingdom, {\tt s.piga@bham.ac.uk}, research supported by EPSRC grant 
\indent EP/V002279/1. 
\\ \indent MS:  Ume{\aa} Universitet, Sweden, {\tt maryam.sharifzadeh@umu.se}
\\  \indent
AT: University of Birmingham, United Kingdom, {\tt a.c.treglown@bham.ac.uk}, research supported by EPSRC grant
\indent EP/V002279/1.}
\begin{document}
\maketitle
%\writedatatofile

\maketitle

% ABSTRACT
% CT papers must include an abstract. The abstract should consist of a
% succinct statement of background followed by a listing of the
% principal new results that are to be found in the paper. The abstract
% should be informative, clear, and as complete as possible. Phrases
% like "we investigate..." or "we study..." should be kept to a minimum
% in favor of "we prove that..."  or "we show that...".  Do not
% include equation numbers, unexpanded citations (such as "[23]"), or
% any other references to things in the paper that are not defined in
% the abstract. The abstract may be distributed without the rest of the
% paper so it must be entirely self-contained.  Try to include all words
% and phrases that someone might search for when looking for your paper.
% You can use some basic LaTeX commands in the abstract, but not any
% user defined macros. 

\begin{abstract}
For a fixed poset $P$, a family $\mathcal F$ of subsets  of $[n]$ is induced $P$-saturated if $\mathcal F$ does not contain an induced copy of $P$, but for every subset $S$ of $[n]$ such that $ S\not \in \mathcal F$, $P$ is an induced subposet of  $\mathcal F \cup \{S\}$.  The size of the smallest such family $\mathcal F$ is denoted by $\text{sat}^* (n,P)$.
Keszegh,  Lemons,  Martin, P\'alv\"olgyi and  Patk\'os [Journal of Combinatorial Theory Series A, 2021] proved that there is a dichotomy of behaviour for this parameter: given any poset $P$, either $\text{sat}^* (n,P)=O(1)$  or $\text{sat}^* (n,P)\geq \log _2 n$. In this paper we improve this general result showing that either  $\text{sat}^* (n,P)=O(1)$ or  $\text{sat}^* (n,P) \geq \min\{ 2 \sqrt{n}, n/2+1\}$. Our proof makes use of a Tur\'an-type result for digraphs.

Curiously, it remains open as to whether our result is  essentially best possible or not.
On the one hand, a conjecture of Ivan states that for the so-called diamond poset $\Diamond$ we have  $\text{sat}^* (n,\Diamond)=\Theta (\sqrt{n})$; so if true this conjecture implies our  
 result is tight up to a multiplicative constant.
On the other hand, a  conjecture of Keszegh,  Lemons,  Martin, P\'alv\"olgyi and  Patk\'os states that
given any poset $P$, either $\text{sat}^* (n,P)=O(1)$  or $\text{sat}^* (n,P)\geq n+1$.
We prove that this latter conjecture is true for a certain class of posets $P$.
\end{abstract}

% TABLE OF CONTENTS, LIST OF FIGURES, LIST OF TABLES
% Please, do not include a table of contents, a list of figures, or a
% list of tables. They will be removed by the editors (and the command
% is actually redefined in the ct.sty file).

%%%%%%%%%%%%%%%%%%%%%%%%%%%%%%%%%%%%%%%%%%%%%%%%%%%
%%%%%%%%%%%%%%%%%%%%%%%%%%%%%%%%%%%%%%%%%%%%%%%%%%%
\let\thefootnote\relax\footnote{The main results of this paper were first announced in the conference abstract~\cite{fpst}.}
\addtocounter{footnote}{-1}\let\thefootnote\svthefootnote

\section{Introduction}
\emph{Saturation} problems have been  well studied in graph theory. A graph $G$ is \emph{$H$-saturated} if it does not contain a  copy of the graph $H$, but adding any edge to $G$ from its complement creates a copy of $H$. Tur\'an's celebrated theorem~\cite{turan} can be stated in the language of saturation: it determines the maximum number of edges in a $K_r$-saturated $n$-vertex graph. In contrast, Erd\H{o}s, Hajnal and Moon~\cite{moon} determined the minimum
number of edges in a $K_r$-saturated $n$-vertex graph; see the survey~\cite{ejcsurvey} for further results in this direction.

In recent years there has been an emphasis on developing the theory of saturation for posets. Tur\'an-type problems have been extensively studied in this setting (see, e.g., the survey~\cite{griggs}). In this paper we are interested in \emph{minimum} saturation questions \`a la Erd\H{o}s--Hajnal--Moon. In particular, we consider \emph{induced saturation} problems.

All posets  we consider will be (implicitly) viewed as finite collections of finite subsets of $\mathds N$. In particular, we say that $P$ is a \emph{poset on $[p]:=\{1,2,\dots,p\}$} if $P$ consists of subsets of $[p]$.
Let $P,Q$ be posets. A \emph{poset homomorphism} from $P$ to $Q$ is a function $\phi: \ P \rightarrow Q$ such that for every $A,B\in P$, if $A \subseteq B$ then $\phi(A)\subseteq \phi(B)$.
 We say that $P$ is a \emph{subposet} of $Q$ if there is an injective poset homomorphism from $P$ to $Q$; otherwise, $Q$ is said to be \emph{$P$-free}. Further we say $P$ is an \emph{induced subposet} of $Q$ if there is an injective poset homomorphism $\phi$ from $P$ to $Q$ such that for every $A,B \in P$, $\phi(A)\subseteq \phi(B)$ if and only if $A\subseteq B$; otherwise, $Q$ is said to be \emph{induced $P$-free}.
  
  For a fixed poset $P$, we say that a family $\mathcal F\subseteq 2^{[n]}$ of subsets  of $[n]$ is \emph{$P$-saturated} if $\mathcal F$ is $P$-free, but for every subset $S$ of $[n]$ such that $ S\not \in \mathcal F$, then $P$ is a subposet of  $\mathcal F \cup \{S\}$. A family $\mathcal F\subseteq 2^{[n]}$ of subsets  of $[n]$ is \emph{induced $P$-saturated} if $\mathcal F$ is induced $P$-free, but for every subset $S$ of $[n]$ such that $ S\not \in \mathcal F$, then $P$ is an induced subposet of  $\mathcal F \cup \{S\}$. 
  
  The study of minimum saturated posets was initiated by Gerbner, Keszegh, Lemons, Palmer, P\'alv\"olgyi and Patk\'os~\cite{gklppp} in 2013. In their work the relevant parameter is
  $\text{sat} (n,P)$, which is defined to be the size of the smallest $P$-saturated family of subsets of $[n]$. See, e.g.,~\cite{gklppp, klmpp, morris} for various results on $\text{sat} (n,P)$.
  
  The induced analogue   of   $\text{sat} (n,P)$ -- denoted by $\text{sat}^* (n,P)$ -- was first considered by Ferrara, Kay, Kramer,  Martin, Reiniger, Smith and Sullivan~\cite{fkkm}. Thus,
  $\text{sat}^* (n,P)$ is defined to be the size of the smallest induced $P$-saturated family of subsets of $[n]$. 
  The following result from~\cite{klmpp} (and implicit in~\cite{fkkm}) shows that the parameter~$\sat^*(n,P)$ has a dichotomy of behaviour.

\begin{theorem}\cite{fkkm, klmpp} \label{bound} 
For any poset $P$, either there exists a constant $K_P$ with $\text{sat}^*(n, P) \leq K_P $ or $\text{sat}^*(n, P) \geq \log _2 n$, for all $n \in \mathds N$.
\end{theorem}
  
Probably the most important open problem in the area is to obtain a tight version of Theorem~\ref{bound}; that is, to replace the $\log _2 n$ in Theorem~\ref{bound} with a term  that is as large as possible.
In fact, Keszegh, Lemons, Martin, P\'alv\"olgyi and Patk\'os~\cite{klmpp} made the following conjecture in this direction.  
  
\begin{conjecture}\cite{klmpp} \label{conj1} 
For any poset $P$, either there exists a constant $K_P$ with $\text{sat}^*(n, P)\leq K_P$ or $\text{sat}^*(n, P) \geq  n+1$, for all $n \in \mathds N$.
\end{conjecture}
    Note that the lower bound of $n+1$ is rather natural here. For example,  it is the size of the largest \emph{chain} in $2^{[n]}$ as well as the smallest possible size of the union of two consecutive `layers' in $2^{[n]}$, namely the layer containing $[n]$ and the layer containing all subsets of $[n]$ of size exactly $n-1$.
    Furthermore, such structures form minimum induced saturated families for the so-called 
    fork poset $\vee$, i.e., $\text{sat}^*(n, \vee) = n+1$~\cite{fkkm}; so the lower bound in Conjecture~\ref{conj1} cannot be increased. There are also no known examples of posets $P$ for which
    $\text{sat}^*(n, P)=\omega (n)$.
    
 In contrast, Ivan~\cite[Section~3]{ivan1} presented evidence that led her to conjecture a rather different picture
 for the  \emph{diamond} poset  $\Diamond$ (see Figure~\ref{fig:posets} for the Hasse diagram of $\Diamond$).

      \begin{conjecture}\cite{ivan1} \label{ivanconj}  
$\text{sat}^*(n, \Diamond) = \Theta (\sqrt{n}).$
    \end{conjecture}

  Our main result is the following improvement of Theorem~\ref{bound}.
\begin{theorem}\label{mainthm} For any poset $P$, either there exists a constant $K_P$ with $\text{sat}^*(n, P) \leq K_P $ or  $\text{sat}^*(n, \mathcal P) \geq \min \{ 2 \sqrt{n}, n/2+1 \}$ for all   $n \in \mathds N$. 
\end{theorem}
Note that if $n \geq 12$ then  $n/2+1 \geq 2 \sqrt{n}$. Thus, if Conjecture~\ref{ivanconj} is true, the lower bound in Theorem~\ref{mainthm} would be tight up to a multiplicative constant. 

\begin{figure}[h]\label{fig:posets}
\hspace{0.1cm}\\
\centering
\begin{tikzpicture}
\draw[black] (0,0)--(0,2)--(2,0)--(2,2);
\fill[black] (0,0) circle (3pt);
\fill[black] (2,0) circle (3pt);
\fill[black] (0,2) circle (3pt);
\fill[black] (2,2) circle (3pt);

\draw[black] (5,0)--(5,1)--(4,2)--(5,1)--(6,2);
\fill[black] (5,0) circle (3pt);
\fill[black] (5,1) circle (3pt);
\fill[black] (4,2) circle (3pt);
\fill[black] (6,2) circle (3pt);

\draw[black] (9,0)--(8,1)--(9,2)--(10,1)--(9,0);
\fill[black] (9,0) circle (3pt);
\fill[black] (8,1) circle (3pt);
\fill[black] (10,1) circle (3pt);
\fill[black] (9,2) circle (3pt);

\draw[black] (12,0)--(13,1)--(12,2);
\draw[black] (14,0)--(13,1)--(14,2);
\fill[black] (12,0) circle (3pt);
\fill[black] (12,2) circle (3pt);
\fill[black] (13,1) circle (3pt);
\fill[black] (14,0) circle (3pt);
\fill[black] (14,2) circle (3pt);
\end{tikzpicture}
\caption{Hasse diagrams for the posets $N$, $Y$, $\Diamond$ and $X$.} 
\end{figure}
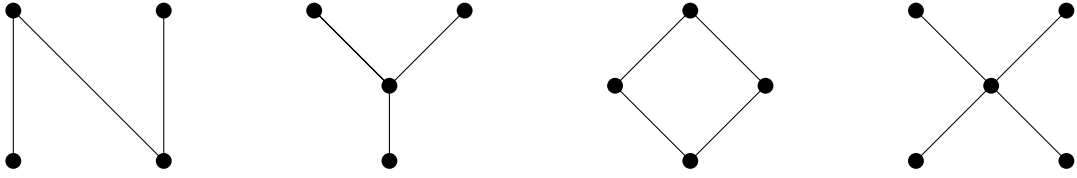

On the other hand, we prove that Conjecture~\ref{conj1} does hold for a class of posets (that does not include $\Diamond$). 
Given~$p\in \mathds N$ and a poset~$P$ on~$[p]$
we define the \emph{dual} $\overline P$ of $P$ as~$\overline P:=\{[p]\setminus F\colon F\in P\}$.
We say a poset~$P$~\emph{has legs} if there are distinct elements~$L_1,L_2,H\in P$ such that~$L_1,L_2$ are incomparable, $L_1,L_2\subseteq H$ and for any other element $A\in P\setminus \{L_1,L_2,H\}$ we have~$A\supseteq H$.
The elements $L_1$ and~$L_2$ are called \emph{legs} and~$H$ is called a \emph{hip}.

\begin{theorem}\label{linearthm}
    Let $P$ be a poset with legs and $n \geq 3$. Then~$\sat^*(n,P)\geq n+1$. 
    Moreover, if both~$P$ and~$\overline P$ have legs, then~$\sat^*(n,P)\geq 2n+2$.
\end{theorem}

Our results still leave both Conjecture~\ref{conj1} and Conjecture~\ref{ivanconj} open, and it is unclear to us which of these conjectures is true. However, if Conjecture~\ref{ivanconj} is true we believe it highly likely that there will be other posets $P$ for which $\text{sat}^*(n, P) = \Theta (\sqrt{n})$. 

\smallskip

It is also natural to seek exact results on $\text{sat}^*(n, P)$. However, 
despite there already  being several papers concerning $\text{sat}^* (n,P)$~\cite{bgjj, dank, fkkm, ivan2, ivan1, klmpp, msw}, there are relatively few posets $P$ for which $\text{sat}^* (n,P)$ is known precisely (see, e.g., Table~1 in~\cite{klmpp} for a summary of most of the known results).
Our next result extends this limited pool of posets, determining $\text{sat}^* (n,X)$ and  $\text{sat}^* (n,Y)$ (see Figure~\ref{fig:posets} for the Hasse diagrams of $X$ and $Y$).

% Despite there already being several papers concerning $\text{sat}^* (n,P)$~\cite{dank, fkkm, ivan2, ivan1, klmpp, msw}, there are very few posets $P$ for which $\text{sat}^* (n,P)$ is known precisely. 
% Our first result extends this limited pool of posets, determining $\text{sat}^* (n,X)$ and  $\text{sat}^* (n,Y)$ (see Figure~\ref{fig:posets} for the definition of $X$ and $Y$).
\begin{theorem}\label{xythm} Given any~$n\in \mathds N$ with~$n\geq 3$, 
\begin{enumerate}[label=\rmlabel]
    \item\label{it:thmy}  $\text{sat}^* (n,Y) = n+2 $ and
    \item\label{it:thmx} $\text{sat}^* (n,X) = 2n+2$. 
\end{enumerate}
\end{theorem}
Note that $\overline X=X$, so Theorem~\ref{xythm}\ref{it:thmx} easily follows via Theorem~\ref{linearthm} and an extremal construction. An application
of Theorem~\ref{linearthm} to $Y$ only yields that $\text{sat}^* (n,Y) \geq n+1 $, so we require an extra idea to obtain 
Theorem~\ref{xythm}\ref{it:thmy}.

\smallskip

In~\cite{msw} a trick was introduced which can be used to prove lower bounds on $\text{sat}^*(n, P) $ for some posets $P$.
The idea is to construct a certain auxiliary digraph $D$ whose vertex set consists of the elements in an induced $P$-saturated family $\mathcal F$; one then argues that how this digraph is defined forces $D$ to contain many edges, which in turn forces a bound on the size of the vertex set of $D$ (i.e., lower bounds $|\mathcal F|)$.
 This trick has been used to prove that $\text{sat}^*(n, \Diamond) \geq \sqrt{n}$~\cite[Theorem~6]{msw} and  $\text{sat}^*(n, N ) \geq \sqrt{n}$~\cite[Proposition~4]{ivan2} (see Figure~\ref{fig:posets} for the Hasse diagram of $N$).

Our proof of Theorem~\ref{mainthm} utilises a variant of this digraph trick. 
In particular, by introducing an appropriate modification to the auxiliary digraph $D$ used in~\cite{msw}, we are able to deduce certain Tur\'an-type properties of $D$. 
Tur\'an problems in digraphs are classical in extremal combinatorics and their study can be traced back to the work of Brown and Harary~\cite{brown}.
Here we prove a Tur\'an-type result concerning \emph{transitive cycles}.

Given~$k\geq 3$, the \emph{transitive cycle on $k$ vertices}~$\diedge{TC}_k$ is a digraph with vertex set~$[k]$ and every directed edge from~$i$ to~$i+1$ for every~$i\in [k-1]$, as well as the directed edge from~$1$ to~$k$. 
We establish an upper bound on the number of edges of a digraph not containing any transitive cycle. 

% Given $k \geq 3$, we write $\overrightharpoonup{TC}_k$ for the \emph{transitive cycle on $k$ vertices}. 
% So $\overrightharpoonup{TC}_k$ has vertex set $[k]$ and there is an edge directed from $i$ to $i+1$ for all $i \in [k-1]$, as well as the directed edge from $1$ to $k$.

\begin{theorem}\label{turanthm}
Let~$n\in\mathds N$ and let~$D$ be a digraph on $n$ vertices. 
If~$D$ is~$\diedge{TC}_k$-free for all~$k\!\geq\! 3$, then
\begin{equation*}\label{eq:turan}
e(D)\leq\max\left\{2(n-1),\left\lfloor\frac{n^2}{4}\right\rfloor\right\}.
\end{equation*}
\end{theorem}
Note that the bound in Theorem~\ref{turanthm} is best possible. Indeed, consider the  $n$-vertex digraph $D$ with vertex classes $A,B$ of size $\left\lfloor n/2\right\rfloor$ and $\left\lceil n/2 \right\rceil$ respectively and all possible directed edges from $A$ to $B$. So $D$ has $\left\lfloor n^2/4\right\rfloor$ edges and contains no transitive cycle. Similarly, pick an arbitrary $n$-vertex tree $T$ and let $D'$ be the digraph with vertex set $V(D')=V(T)$ and with an edge directed from $i$ to $j$ if and only if the unordered pair $ij$ is an edge in $T$. So $D'$ has $2(n-1)$ edges and contains no transitive cycle.

% In particular, we can then use a  result of Brown and Harary~\cite{brown} from the 1960s to deduce the $C=\sqrt{2}$ case of 
% Theorem~\ref{mainthm}. By presenting a bespoke  Tur\'an-type result (Theorem~\ref{turanthm} below), we are able to deduce the $C=2-o(1)$ case. 
In Section~\ref{sec:conc} we explain why exploiting the connection between the induced saturation problem for posets and the Tur\'an problem for digraphs hits a natural barrier if we wish to further improve the lower bound in Theorem~\ref{mainthm}.

\smallskip

We conclude this introductory section by setting out notation that will be used in the rest of the paper.

\subsection*{Notation} Given two elements $A,B$ of a poset $\mathcal{F}\subseteq 2^{[n]}$ we say that \emph{$A$ dominates $B$} if $B\subseteq A$. 
We say $A$ and $B$ are \emph{comparable} if one dominates the other; otherwise we say they are \emph{incomparable}. 
An element $F$ in $\mathcal{F}$ is \emph{maximal} if there is no other element in $\mathcal{F}$ which dominates $F$. 
We define \emph{minimal} analogously.

For a digraph $D=(V(D),E(D))$, we write $\diedge{xy}$ for the edge in $D$ directed from the vertex $x$ to the vertex $y$.
We say that $\diedge{xy}$ is an {\it out-edge} of $x$ and an {\it in-edge} of $y$. For brevity, we write $e(D):=|E(D)|$. Given a set $U\subseteq V(D)$, we denote the induced subgraph of $D$ with vertex set $U$ as $D[U]$.
The \emph{underlying graph of~$D$} is the graph with vertex set~$V(D)$ whose edges are all (unordered) pairs~$\{x,y\}$ such that $\diedge{xy}\in E(D)$ or $\diedge{yx}\in E(D)$. 

Given~$k\geq 2$, an \emph{oriented path} $v_1\dots v_k$ from $v_1$ to $v_k$ consists of the edges $\diedge{v_iv}_{i+1}$ for every $i\in[k-1]$. 
Similarly, an \emph{oriented cycle} $v_1\dots v_k$ consists of the edges $\diedge{v_iv}_{i+1}$ for every $i\in[k-1]$ and $\diedge{v_kv}_{1}$.

\section{The proofs}

\subsection{Proof of Theorem~\ref{turanthm}}
\newparagraph

We proceed by induction on $n$. For the base case, it is easy to check that the statement of the theorem holds for $n\leq3$. Next, we prove the inductive step. 

Let $D$ be a digraph on $n\geq4$ vertices which is $\diedge{TC}_k$-free for all $k\geq3$.

% For~$k\geq 2$, the vertices~$v_1,\dots, v_k$ form an oriented cycle if~$v_{i}v_{i+1}$ is an edge for every $i\in [k]$ where the indices are consider to be modulo~$k$.

% For the following claim we consider a pair of edges~$\diedge{uv}$ and~$\diedge{vu}$ to be an oriented cycle of length two.

\begin{claim}\label{claim:cycle}
If $D$ contains an induced oriented cycle then $e(D)\leq\max\left\{2(n-1),\left\lfloor\frac{n^2}{4}\right\rfloor\right\}$.
\end{claim}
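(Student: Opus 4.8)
The plan is to contract the induced oriented cycle to a single vertex and then apply the induction hypothesis to the resulting smaller digraph. Fix an induced oriented cycle $C=v_1\dots v_k$ in $D$, so that $k\geq 2$ and $D[\{v_1,\dots,v_k\}]$ consists of precisely the $k$ edges $\diedge{v_1v_2},\dots,\diedge{v_{k-1}v_k},\diedge{v_kv_1}$. Write $W:=V(D)\setminus\{v_1,\dots,v_k\}$.

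The first step is to record the local structure around $C$: for every $x\in W$ there is at most one index $i$ with $\diedge{v_ix}\in E(D)$, and at most one index $j$ with $\diedge{xv_j}\in E(D)$. Indeed, if $\diedge{v_ix},\diedge{v_jx}\in E(D)$ with $i\neq j$, then traversing $C$ from $v_i$ to $v_j$ and appending the edge $\diedge{v_jx}$ produces an oriented path $v_i\dots v_jx$ of length at least $2$ on distinct vertices; together with $\diedge{v_ix}$ this is a copy of $\diedge{TC}_t$ in $D$ for some $t\geq 3$, contradicting the hypothesis. The claim about out-edges of $x$ towards $C$ is entirely symmetric.

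Next let $D'$ be obtained from $D$ by contracting $\{v_1,\dots,v_k\}$ to a single vertex $c$ and deleting the loops this creates. By the previous step no two edges of $D$ between $W$ and $\{v_1,\dots,v_k\}$ become the same edge of $D'$, so $D'$ is simple and the only edges lost are the $k$ edges of $C$; hence $e(D)=e(D')+k$ while $|V(D')|=n-k+1<n$. The crucial point is that $D'$ is again $\diedge{TC}_t$-free for every $t\geq 3$: a copy of $\diedge{TC}_t$ in $D'$ that avoids $c$ already lies in $D$, and one that uses $c$ lifts to $D$ by reinstating an appropriate directed segment of $C$ between the two cycle-vertices underlying the edges of the transitive cycle incident to $c$, yielding a copy of $\diedge{TC}_{t'}$ in $D$ with $t'\geq t\geq 3$.

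Finally, the induction hypothesis gives $e(D')\leq\lfloor (n-k+1)^2/4\rfloor+2$, hence $e(D)\leq\lfloor (n-k+1)^2/4\rfloor+k+2$, and it remains to check the elementary inequality $\lfloor n^2/4\rfloor-\lfloor (n-k+1)^2/4\rfloor\geq k$ for all integers $2\leq k\leq n$ with $n\geq 4$. The step I expect to be the main obstacle is the lifting argument in the previous paragraph, and in particular the subcase in which $c$ plays the role of the tail (or head) of the ``long'' edge of the transitive cycle in $D'$; one also has to be a little careful that the final arithmetic is not lost for the smallest values $k\in\{2,3\}$, where it is tight, the slack being supplied exactly by the additive constant $2$ from the induction hypothesis.
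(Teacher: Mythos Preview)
Your approach is exactly the paper's: contract the induced oriented cycle to a single vertex, observe (via the $\diedge{TC}$-freeness of $D$) that each outside vertex has at most one in-edge and one out-edge to the cycle so no edges are identified, check that the contracted digraph is still $\diedge{TC}_t$-free by lifting along an arc of $C$, and finish with the induction hypothesis and the arithmetic inequality $\lfloor(n-k+1)^2/4\rfloor+k\leq\lfloor n^2/4\rfloor$ for $n\geq 4$, $k\geq 2$. One small correction to your closing remark: the additive $+2$ is carried through unchanged on both sides and supplies no slack whatsoever for that inequality---it holds as stated (tightly at $k=2,3$ when $n$ is small) without any help from the constant.
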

\begin{proofclaim}
Suppose $D$ contains an induced oriented cycle $C$.
For every $v\in V(D)\setminus V(C)$, it is straightforward to check that, since $D[V(C)\cup\{v\}]$ contains no transitive cycle, then
\begin{enumerate}[label=\rmlabel]
    \item there is at most one in-edge of $v$ incident to $V(C)$ and \label{it:in}
    \item there is at most one out-edge of $v$ incident to $V(C)$.\label{it:out}
\end{enumerate}
Let $D'$ be the digraph obtained by contracting the cycle $C$ into one vertex $c$. Namely,~$D'$ has vertex set~$V(D')=( V(D)\cup \{c\})\setminus V(C)$ and  $E(D')$ is the union of the following sets:
\begin{itemize}
    \item $E(D[V(D)\setminus V(C)])$,
    \item $\{\diedge{xc}: \exists \ \diedge{xy}\in E(D),x\notin V(C),y\in V(C)\}$ and
    \item $\{\diedge{cx}: \exists \ \diedge{yx}\in E(D),x\notin V(C),y\in V(C)\}$.
\end{itemize}
Note that  properties \ref{it:in} and \ref{it:out} imply that  $e(D')=e(D)-e(C)=e(D)-|V(C)|$.

Suppose $D'$ contains a transitive cycle $\diedge{TC}_k$ on vertices $v_1,\dots,v_k$ for some $k\geq3$. Namely, $\diedge{v_jv}_{j+1}\in E(D')$ for every $j\in[k-1]$ and $\diedge{v_1v}_k\in E(D')$.
If $c\not=v_i$ for every $i\in[k]$ then $v_1,\dots,v_k$ form a transitive cycle in $D$, a contradiction. 
Therefore, $c=v_i$ for some $i\in[k]$.

For brevity we only consider the case $i\not=1,k$ (the cases $i=1$ and $i=k$ can be handled with a similar argument). 
By the definition of $D'$, there exist $c_1,c_2\in V(C)$ such that \mbox{$\diedge{v_{i-1}c}_1 , \diedge{c_2v}_{i+1}\in E(D)$.} Furthermore, there exists an oriented path $u_1 \dots u_\ell$ from $u_1$ to $u_\ell$ such that $u_1=c_1$, $u_\ell=c_2$ and $u_j\in E(C)$ for all $j\in[\ell]$. Then $v_1\dots v_{i-1} u_1 \dots u_\ell v_{i+1} \dots v_k$ is an oriented path from $v_1$ to $v_k$ in $D$. Together with $\diedge{v_1v}_k\in E(D)$, this forms a transitive cycle in $D$, a contradiction.

Therefore, $D'$ is $\diedge{TC}_k$-free for all $k\geq3$; so by the induction hypothesis we have 
$$e(D')\leq\max\left\{2(n-|V(C)|),\left\lfloor\frac{(n-|V(C)|+1)^2}{4}\right\rfloor\right\}.$$ 
The inequalities $n\ge4$ and $2\leq|V(C)|\leq n$ imply
$$2(n-|V(C)|)+|V(C)|\leq2(n-1)\quad\text{and}\quad\left\lfloor\frac{(n-|V(C)|+1)^2}{4}\right\rfloor+|V(C)|\leq\left\lfloor\frac{n^2}{4}\right\rfloor.$$
In particular, $e(D)=e(D')+|V(C)|\leq\max\left\{2(n-1),\left\lfloor\frac{n^2}{4}\right\rfloor\right\}$. This concludes the proof of the claim.\qedclaim
\end{proofclaim}

Because of Claim~\ref{claim:cycle} we may assume that $D$ contains no double edges (which are induced oriented cycles on two vertices). 
%If $D$ contains an induced oriented cycle then we are done by Claim~\ref{claim:cycle}, so we may assume that $D$ has no double edges. 
Additionally, we may assume that the underlying graph of $D$ contains no triangle,
since such a triangle would either correspond to an induced oriented cycle or a transitive cycle~$\diedge{TC}_3$ in $D$.
By Mantel's theorem, a triangle-free graph on $n$ vertices has at most $\lfloor n^2/4\rfloor$ edges, hence $e(D)\leq\left\lfloor\frac{n^2}{4}\right\rfloor.$
This concludes the inductive step. \qed
%As $D$ has no double edges, we can define $D^{\star}$ to be the simple graph obtained by ignoring the orientation of the edges of $D$. In particular, we have $e(D)=e(D^{\star})$.
%Observe that $D^{\star}$ is triangle-free, hence Mantel's theorem implies
%$$e(D)=e(D^{\star})\leq\left\lfloor\frac{n^2}{4}\right\rfloor.$$

\subsection{Proof of Theorem~\ref{mainthm}}
\newparagraph

% As indicated above, our proof will rely on the following two Tur\'an-type results for digraphs. 

% \begin{thm}\cite{brown}\label{brown}

% \end{thm}

% \begin{thm}\label{turanthm}

% \end{thm}
% SKETCH PROOF of Theorem~\ref{turanthm}

We prove Theorem~\ref{mainthm} using the following two lemmata. 
%Note that a weaker version of  Lemma~\ref{digraph} (where $|\mathcal F|\geq \sqrt{n}$) was implicitly proven in~\cite{msw}.

\begin{lemma}\label{digraph}
Let~$\mathcal{F}\subseteq 2^{[n]}$.
If for every $i\in[n]$ there are elements $A,B\in\mathcal{F}$ such that $A\setminus B=\{i\}$ then $|\mathcal{F}|\geq \min \{ 2 \sqrt{n}, n/2+1 \}$.
\end{lemma}
Notice that Lemma~\ref{digraph} is not specifically about induced saturated families. We will discuss this further in Section~\ref{sec:conc}.

\begin{lemma}\label{blowup}
Given a poset~$P$, let~$\mathcal{F}_0\subseteq 2^{[n_0]}$ be an induced $P$-saturated family.
Suppose there is an $i\in[n_0]$ such that there are no elements $A,B\in\mathcal{F}_0$ satisfying $A\setminus B=\{i\}$.
Then $\text{sat}^*(n,P)\leq|\mathcal{F}_0|$ for every $n\geq n_0$.
\end{lemma}

\begin{proof}[Proof of Theorem~\ref{mainthm}]
Let $P$ be a poset. Suppose that for every $n\in\mathds N$ and every induced $P$-saturated family $\mathcal{F}\subseteq2^{[n]}$ we have that for every $i\in[n]$ there are elements $A,B\in\mathcal{F}$ such that $A\setminus B=\{i\}$. Then Lemma~\ref{digraph} implies $|\mathcal{F}|\geq \min \{ 2 \sqrt{n}, n/2+1 \}$, and thus $\text{sat}^*(n,P)\geq \min \{ 2 \sqrt{n}, n/2+1 \}$.

Otherwise, there exists some $n_0\in\mathds N$ and $i\in[n_0]$ so that there is
 an induced $P$-saturated family $\mathcal{F}_0\subseteq2^{[n_0]}$   such that there are no elements $A,B\in\mathcal{F}_0$ with $A\setminus B=\{i\}$. Then Lemma~\ref{blowup} implies that $\text{sat}^*(n,P)\leq K_P$ for every $n\in\mathds N$ where 
$$K_P: =\max\{|\mathcal{F}_0|, \, \text{sat}^*(m,P):m<n_0\}.$$
\end{proof}

The rest of this subsection covers the proofs of Lemmata~\ref{digraph} and~\ref{blowup}. 
For the proof of Lemma~\ref{digraph} we apply Theorem~\ref{turanthm}.

\begin{proof}[Proof of Lemma~\ref{digraph}]
Let $D$ be a digraph with vertex set $\mathcal{F}$ and edge set $E(D)$ defined as follows: for every $i\in[n]$, 
choose precisely one pair $A,B \in \mathcal F$ with $A\setminus B=\{i\}$; add the edge 
 $\diedge{AB}$ to $E(D)$.
Thus~$D$ has exactly~$n$ edges. Note that the hypothesis of the lemma ensures $D$ is well-defined.

\begin{claim}\label{claim:TC}
For every~$k\geq 3$,~$D$ is~$\diedge{TC}_k$-free.
\end{claim}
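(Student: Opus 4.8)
\textbf{Proof proposal for Claim~\ref{claim:TC}.}

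The plan is to argue by contradiction: suppose $D$ contains a copy of $\diedge{TC}_k$ for some $k\geq 3$, say on vertices $F_1,\dots,F_k\in\mathcal{F}$, with edges $\diedge{F_jF}_{j+1}$ for $j\in[k-1]$ and $\diedge{F_1F}_k$. By the definition of $D$, each edge $\diedge{AB}$ records a unique coordinate $i\in[n]$ with $A\setminus B=\{i\}$; in particular, for each edge $\diedge{AB}$ of $D$ we have $B\subsetneq A$, since $A\setminus B$ is a singleton and (taking the edge for coordinate $i$) we need $i\in A\setminus B$, forcing $B\subseteq A\setminus\{i\}\subsetneq A$. Wait — this is not automatic: $A\setminus B=\{i\}$ only says the sole element of $A$ not in $B$ is $i$; it does not say $B\subseteq A$. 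So first I would observe that $D$ may well contain edges $\diedge{AB}$ where $A$ and $B$ are incomparable or where $A\subsetneq B$. Hence the contradiction must come from the coordinates, not from containment alone.

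So the cleaner approach: let $i_j\in[n]$ be the coordinate labelling the edge $\diedge{F_jF}_{j+1}$ (indices mod $k$, so the edge $\diedge{F_1F}_k$ is labelled $i_k$), meaning $F_j\setminus F_{j+1}=\{i_j\}$ for $j\in[k-1]$ and $F_1\setminus F_k=\{i_k\}$. The first key step is to track where the coordinate $i_k$ lives as we walk around the transitive cycle. Since $i_k\in F_1$ and $i_k\notin F_k$, and $F_1\setminus F_k=\{i_k\}$, we get in particular $i_k\in F_1$. Now consider the oriented path $F_1F_2\cdots F_k$: along each step $F_j\to F_{j+1}$ we have $F_j\setminus F_{j+1}=\{i_j\}$, so $F_{j+1}$ can differ from $F_j$ only by possibly dropping $i_j$ and possibly adding other elements — no, again $F_j\setminus F_{j+1}=\{i_j\}$ forces $F_j\subseteq F_{j+1}\cup\{i_j\}$, i.e. every element of $F_j$ except possibly $i_j$ stays in $F_{j+1}$. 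Therefore if $x\in F_1$ and $x\notin\{i_1,\dots,i_{k-1}\}$ then $x\in F_k$. Apply this with $x=i_k$: either $i_k\in F_k$ (contradiction, since $i_k\in F_1\setminus F_k$), or $i_k=i_j$ for some $j\in[k-1]$. So the hard part reduces to ruling out $i_k=i_j$.

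The main obstacle, then, is the case $i_k=i_j$ for some $j\in[k-1]$. Here I would split further: $i_k\in F_1$ and $i_k\notin F_k$, so somewhere along the path the coordinate $i_k$ is dropped; the step where it is dropped must be a step $F_m\to F_{m+1}$ with $i_m=i_k$ (since only the labelled coordinate can leave). Now look at the "tail" path $F_{m+1}F_{m+2}\cdots F_k$ together with the edge $\diedge{F_1F}_k$: by the same "elements only leave via their label" principle applied to this shorter path, and using that $i_k\notin F_{m+1}$ (it was just dropped) — hmm, but we would want to re-derive that $i_k\in F_k$ or reach a fresh contradiction. The natural finish is an extremality/minimality argument: choose the copy of $\diedge{TC}_k$ in $D$ with $k$ minimal. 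If $i_k=i_m$ for some $m\in[k-1]$, I claim one can short-cut the transitive cycle to produce a transitive cycle on fewer vertices, contradicting minimality — for instance, if $F_m\setminus F_{m+1}=\{i_k\}=F_1\setminus F_k$, then $F_m$ and $F_k$ agree off $\{i_k\}$ while $F_{m+1}$ and $F_1$ agree off $\{i_k\}$ in the complementary direction, and one should be able to splice $\diedge{F_1F_2\cdots F_m}$ with $\diedge{F_{m+1}\cdots F_k}$ and a single edge to close up. I expect the bookkeeping here — matching up which of $F_m,F_{m+1},F_1,F_k$ contains $i_k$ and verifying the spliced edge is genuinely in $D$ — to be the delicate point, and the argument likely needs the precise one-edge-per-coordinate definition of $D$ rather than just the poset structure. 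Once the claim is established, Theorem~\ref{turanthm} gives $n=e(D)\leq\lfloor|\mathcal{F}|^2/4\rfloor+2\leq|\mathcal{F}|^2/4+2$, whence $|\mathcal{F}|\geq 2\sqrt{n-2}$, which is exactly Lemma~\ref{digraph}.
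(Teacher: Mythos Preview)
Your derivation that $F_1\setminus F_k\subseteq\{i_1,\dots,i_{k-1}\}$ is exactly the paper's argument, and is the only substantive step needed. The gap is in what you call ``the hard part'': ruling out $i_k=i_j$ for some $j\in[k-1]$ is \emph{immediate} from the construction of $D$, not delicate at all. Recall that $D$ was built by choosing, for each $i\in[n]$, \emph{precisely one} ordered pair $(A,B)$ with $A\setminus B=\{i\}$ and adding that single edge. Hence distinct edges of $D$ carry distinct labels. In a copy of $\diedge{TC}_k$ with $k\geq 3$ the edge $\diedge{F_1F}_k$ is distinct from every path edge $\diedge{F_jF}_{j+1}$ (if they coincided we would need $F_1=F_j$ and $F_k=F_{j+1}$, forcing $j=1$ and $k=2$). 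Therefore $i_k\neq i_j$ for all $j\in[k-1]$, and you are done.

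Your proposed minimality/splicing detour is both unnecessary and, as written, does not close: to produce a shorter transitive cycle you would need a new ``closing'' edge in $D$, but $D$ has only the $n$ prescribed edges and there is no reason any candidate edge (say $\diedge{F_1F}_m$ or $\diedge{F_mF}_k$) lies in $E(D)$. The observation you flag at the end---that the argument ``likely needs the precise one-edge-per-coordinate definition of $D$''---is exactly the point, but it should be invoked directly to conclude $i_k\notin\{i_1,\dots,i_{k-1}\}$, not buried inside a splicing scheme.
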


\begin{proofclaim}
    It suffices to show that given~$\{A_1, \dots, A_k\}\subseteq \mathcal F$ such that $\diedge{A_jA}_{j+1}\in E(D)$ for every~$j\in [k-1]$ then~$\diedge{A_1A}_k\notin E(D)$.
    By definition of~$D$, there are distinct  $i_1, \dots, i_{k-1}\in [n]$ such that
    \begin{align}\label{eq:diredge}
        A_{j}\setminus A_{j+1} = \{i_j\} \qquad \text{for every }j\in [k-1].
    \end{align}
    This implies $A_{j}\subseteq A_{j+1}\cup\{i_j\}$ for every $j\in [k-1]$, and thus~$A_1\subseteq A_k\cup \{i_1,\dots,i_{k-1}\}$. 
    Hence,
    \begin{align}\label{eq:diredge2}
    A_1\setminus A_{k}\subseteq \{i_1, \dots, i_{k-1}\}.
    \end{align}
    Note that if $\diedge{A_1A}_k\in E(D)$ then \eqref{eq:diredge2} implies that 
     $A_1\setminus A_k=\{i_j\}$ for some $j\in[k-1]$. However, 
    recall that for every~$i\in[n]$ there is exactly one edge~$\diedge{AB}$ in~$D$ such that~$A\setminus B=\{i\}$; so~\eqref{eq:diredge} implies  that $\diedge{A_1A}_k\notin E(D)$, as desired.\qedclaim
\end{proofclaim}

By Claim~\ref{claim:TC}, we can apply Theorem~\ref{turanthm} to the digraph $D$. This yields
$$n = |E(D)|\leq \max \left \{ 2(|\mathcal F|-1), \frac{|\mathcal F|^2}{4} \right \}\,, $$
which implies $|\mathcal{F}|\geq \min \{ 2 \sqrt{n}, n/2+1 \}$. 
\end{proof}

%The intuition behind Lemma~\ref{blowup} is the following. Since there are no %$A,B\in\mathcal{F}$ such that $A\setminus B=\{i\}$, it is easy to check that the %family $\mathcal{F}^*\subseteq 2^{[n]\setminus\{i\}}$ obtained by deleting $i$ %from all elements in $\mathcal{F}$ is induced $P$-saturated. Therefore, we can %think of $\mathcal{F}$ as a family obtained by adding $i$ to some elements of an %induced $P$-saturated family. Since this augmenting process preserved the %induced $P$-saturated property of the family, intuitively we could add

We now present the proof of Lemma~\ref{blowup}. 

\begin{proof}[Proof of Lemma~\ref{blowup}]
Observe that it is enough to prove that for every~$n\!\geq\!n_0$ there exists a family~$\mathcal F\subseteq 2^{[n]}$ such that 
\begin{enumerate}[label=\rmlabel]
    \item $|\mathcal F|=|\mathcal F_0|$, \label{it:sizeofF}
    \item $\mathcal F$ is induced~$P$-saturated and \label{it:Fissat}
    \item there are no elements $A, B\in \mathcal F$ satisfying~$A\setminus B=\{i\}$. \label{it:split}
\end{enumerate}
We proceed by induction on~$n$ and observe that the base case ($n=n_0$) follows directly from the assumption in the statement of the lemma. 

Given an induced $P$-saturated family~$\mathcal F\subseteq 2^{[n]}$ satisfying~\ref{it:sizeofF}-\ref{it:split} we consider the following function $f$ from~$2^{[n]}$ to $2^{[n+1]}$:
$$f(A):=\begin{cases}
A & \text{if $i\not\in A$ and}\\
A\cup \{n+1\} & \text{if $i\in A$.}\end{cases}$$
We shall prove that the family~$\mathcal F' := f(\mathcal F)\subseteq 2^{[n+1]}$ satisfies~\ref{it:sizeofF}-\ref{it:split}.

First, note that~\ref{it:sizeofF} follows directly since~$f$ is injective.
Second, \ref{it:split} also follows easily, since every element $f(A)$ either contains both $i$ and  $n+1$ or neither of them. 
Actually, because of this last property one might say that~$n+1$ behaves as a `copy' of~$i$ in~$f(2^{[n]})$.
We need to prove~\ref{it:Fissat}, i.e., that~$\mathcal F'$ is induced $P$-saturated.

It is easy to check that~$f$ preserves the inclusion/incomparable relations between elements. 
More precisely, for every~$A,B\in 2^{[n]}$ we have 
\begin{align}\label{eq:relations}
    A\subseteq B 
    \quad \Longleftrightarrow\quad
    f(A)\subseteq f(B).
\end{align}
Therefore, if~$P'$ forms an induced copy of~$P$ in~$\mathcal F'$, then~$f^{-1}(P')\subseteq \mathcal F$ forms an induced copy of~$P$ in~$\mathcal F$.
This means that, since~$\mathcal F$ is induced $P$-free, $\mathcal F'$ must be induced~$P$-free as well.
It is left to prove that for any $S\in 2^{[n+1]} \setminus \mathcal{F}'$, there is an induced copy of $P$ in $\mathcal{F'}\cup \{S\}$.
There are four cases to consider depending on~$S$. 
The first two cases are short and  the fourth case is an easy consequence of the third.
Let~$S\in 2^{[n+1]} \setminus \mathcal{F}'$.

\smallskip
\textit{{{\hskip 0.7em First case: $i,n+1\notin S$.}}}
Note that $S\subseteq 2^{[n]}$ and $f(S)=S$, thus $S\not\in\mathcal{F}$ (as otherwise we would have $S\in\mathcal{F}'$). 
As $\mathcal{F}$ is induced $P$-saturated, there exists an induced copy $P'$ of $P$ in $\mathcal{F}\cup \{S\}$. 
By~\eqref{eq:relations} the set $\{f(F):F\in P'\}$ is an induced copy of $P$ in $\mathcal{F'}\cup \{S\}$.

\smallskip
\textit{{{\hskip 0.7em Second case: $i,n+1\in S$.}}}
Set $S^\star:=S\setminus\{n+1\}$. 
Note that $S^\star\subseteq 2^{[n]}$ and $f(S^\star)=S$, thus $S^\star\not\in\mathcal{F}$ (as otherwise we would have $S\in\mathcal{F}'$).
As $\mathcal{F}$ is induced $P$-saturated, there exists an induced copy $P'$ of $P$ in $\mathcal{F}\cup \{S^\star\}$. 
By~\eqref{eq:relations} the set $\{f(F):F\in P'\}$ is an induced copy of $P$ in $\mathcal{F'}\cup {S}$.

\smallskip
\textit{{{\hskip 0.7em Third case: $i \in S$ and~$n+1\notin S$.}}}
For this case we use the assumption that there are no elements $A,B\in\mathcal{F}$ such that $A\setminus B=\{i\}$. 
Let $S^{\star}:=S\setminus\{i\}\in 2^{[n]}$.
We use the sets~$S$ and $S^{\star}$ to find elements~$A,B\in \mathcal F$ that will contradict~\ref{it:split} (for the family~$\mathcal F$).

\begin{claim}\label{claim:A}
Either $\mathcal{F'}\cup \{S\}$ contains an induced copy of $P$ or there exists an element $A\in\mathcal{F}$ such that $A\subseteq S$ and $i\in A$.
\end{claim}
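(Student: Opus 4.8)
The plan is to argue by contradiction: assume $\mathcal{F}' \cup \{S\}$ contains no induced copy of $P$, and derive that some $A \in \mathcal{F}$ satisfies $A \subseteq S$ and $i \in A$. The natural move is to apply the induced $P$-saturation of $\mathcal{F}$ to the set $S^\star = S \setminus \{i\} \in 2^{[n]}$. There are two subcases. If $S^\star \in \mathcal{F}$, then $f(S^\star) = S^\star$ (since $i \notin S^\star$), so $S^\star \in \mathcal{F}'$; but $S^\star \subseteq S$ and $S \in 2^{[n+1]}\setminus\mathcal{F}'$ with $S = S^\star \cup \{i\}$, and one should check this still leads somewhere useful — actually the cleaner route is: if $S^\star \in \mathcal{F}$ we may take $A := S^\star$? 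No: we need $i \in A$, which fails. So instead I would handle $S^\star \in \mathcal{F}$ by observing that then $S = S^\star \cup \{i\} \notin \mathcal{F}'$, and look directly at whether $S^\star$ together with $\mathcal{F}'$ already gives a copy of $P$; more robustly, I will simply apply saturation of $\mathcal{F}$ to $S^\star$ only when $S^\star \notin \mathcal{F}$, and dispose of the case $S^\star \in \mathcal{F}$ separately and quickly.

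The main case is $S^\star \notin \mathcal{F}$. Then, since $\mathcal{F}$ is induced $P$-saturated, there is an induced copy $P'$ of $P$ in $\mathcal{F} \cup \{S^\star\}$, and since $\mathcal{F}$ is induced $P$-free we must have $S^\star \in P'$. Now I want to transport $P'$ into $\mathcal{F}' \cup \{S\}$. Apply $f$ to every element of $P' \setminus \{S^\star\} \subseteq \mathcal{F}$, obtaining elements of $\mathcal{F}'$, and replace $S^\star$ by $S$. By~\eqref{eq:relations}, $f$ preserves all inclusions and incomparabilities among elements of $\mathcal{F}$, so the only relations that could fail to be preserved are those between $S$ and the various $f(F)$ for $F \in P' \setminus \{S^\star\}$. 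Since $S = S^\star \cup \{i\}$ and $f(F) \supseteq F$ with $f(F) \setminus F \subseteq \{n+1\}$, one checks: $f(F) \subseteq S$ iff $F \subseteq S$ iff ($F \subseteq S^\star$, using $i \notin F$ when $n+1 \notin f(F)$, i.e. $i \notin F$ — wait, if $n+1 \in f(F)$ then $i \in F$, but $i$ might still lie in $S$). The delicate point is exactly the pairs where the relation $F \subseteq S^\star$ in $\mathcal{F} \cup \{S^\star\}$ becomes $f(F) \subseteq S$ in $\mathcal{F}' \cup \{S\}$: going up from $S^\star$ to $S$ can only \emph{create} new containments $f(F) \subseteq S$, never destroy the non-containments in the other direction, and $S \subseteq f(F)$ is equivalent to $S^\star \subseteq f(F)$ together with $i \in f(F)$.

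So the obstruction is: after the substitution, some $f(F)$ with $F \subseteq S^\star$ in $P'$ now also has a new relation, namely $S \subseteq f(F)$ could newly hold, or — the real worry — for some $F \in P'$ with $S^\star \subseteq F$ we might newly get $S \subseteq f(F)$ when before only $S^\star \subseteq F$ held; but that is fine, the relation is still $S \subseteq$ that element, consistent with $S^\star$'s role. The genuinely problematic case is when some $F \in P'$ with $F \subseteq S^\star$ satisfies $i \in F$ — then $f(F) \ni n+1$, and we want to conclude $f(F) \subseteq S$, which holds since $F \subseteq S^\star \subseteq S$ and $i \in S$, so $f(F) = F \cup \{n+1\}$; but $n+1 \notin S$, so $f(F) \not\subseteq S$! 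This breaks the copy. However, $F \subseteq S^\star$ and $i \in F$ forces $i \in S^\star = S \setminus\{i\}$, a contradiction — so no such $F$ exists, and the transport goes through, giving an induced copy of $P$ in $\mathcal{F}' \cup \{S\}$. Symmetrically, if instead $S^\star \subseteq F$ for some $F \in P'$, then after substitution we need the relation between $S$ and $f(F)$: we get $S^\star \subseteq F \subseteq f(F)$, and $S \subseteq f(F)$ iff $i \in f(F)$. If $i \notin f(F)$ — equivalently $i \notin F$ — then $S \not\subseteq f(F)$ while $S^\star \subseteq f(F)$, so the relation changed and the copy may break. This is precisely where the assumed absence of a copy of $P$ in $\mathcal{F}' \cup \{S\}$ is used: it must be that \emph{every} $F \in P'$ with $S^\star \subsetneq F$ has $i \in F$. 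Pick such an $F$ (the element of $P'$ immediately above $S^\star$ in the copy — existence requires $S^\star$ to be non-maximal in $P'$, which I will need to justify, presumably from the structure of $P$, or else handle the maximal case, where $S^\star$ being maximal means the copy survives trivially and we are done). Then $A := f^{-1}$ of that — no: take $A := F \in \mathcal{F}$ itself; we have $i \in A$ and $S^\star \subseteq A$, but we need $A \subseteq S$, not $S^\star \subseteq A$. So I have the inclusion backwards; the correct element to extract is one \emph{below} $S$. I would instead look at elements $F \in P'$ with $F \subseteq S^\star$: for the copy to survive we'd need no trouble, and trouble arises only in the "above" direction, so the contradiction-hypothesis forces a structural constraint that yields an $A \in \mathcal{F}$ with $A \subsetneq S^\star \cup\{i\} = S$ and $i \in A$ — concretely, $A$ is obtained as the preimage under $f$ of an element of the copy of $P$ in $\mathcal{F}'\cup\{S\}$ that sits strictly below $S$'s position, using that such an element must contain $i$ (else its image relation with $S$ would be wrong), hence lies in $\mathcal{F}$, contains $i$, and is contained in $S$. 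I expect the main obstacle to be this bookkeeping of exactly which inclusion relations are perturbed by passing from $S^\star$ to $S$, and correctly locating the element of the $P$-copy that yields $A$; everything else is routine once~\eqref{eq:relations} is in hand.
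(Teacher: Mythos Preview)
Your approach has a genuine gap: you apply the induced $P$-saturation of $\mathcal{F}$ to the wrong set. You work with $S^\star = S \setminus \{i\}$, but the correct move for Claim~\ref{claim:A} is to apply saturation directly to $S$. In the third case $n+1 \notin S$, so $S \subseteq [n]$ and hence $S \in 2^{[n]}$; moreover the assumption that no $A \in \mathcal{F}$ satisfies $A \subseteq S$ and $i \in A$ immediately forces $S \notin \mathcal{F}$ (else take $A = S$). Saturation then yields an induced copy $P' \subseteq \mathcal{F} \cup \{S\}$ containing $S$, and one transports it by applying $f$ to $P' \setminus \{S\}$ while keeping $S$ itself fixed. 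The only nontrivial relation to check is that $F \subseteq S$ implies $f(F) \subseteq S$; but $F \subseteq S$ with $F \in \mathcal{F}$ forces $i \notin F$ by the standing assumption, so $f(F) = F$ and the relation survives.

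Your route via $S^\star$ is in fact the argument for the companion Claim~\ref{claim:B}, not for Claim~\ref{claim:A}. When the transport of a copy $P' \subseteq \mathcal{F} \cup \{S^\star\}$ fails, the obstruction is an element $F \in \mathcal{F}$ with $S^\star \subseteq F$ and $i \notin F$ --- exactly the $B$ of Claim~\ref{claim:B}, with the inclusion pointing the wrong way for Claim~\ref{claim:A}. You noticed this yourself (``I have the inclusion backwards''), and your attempted fix --- extracting an element of $P'$ \emph{below} $S^\star$ that contains $i$ --- cannot work, since any $F \subseteq S^\star$ automatically satisfies $i \notin F$ (as $i \notin S^\star$). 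There is simply no way to manufacture an $A \subseteq S$ with $i \in A$ out of a copy of $P$ built around $S^\star$; the two claims require saturation applied to the two different sets $S$ and $S^\star$ respectively.
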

\begin{proofclaim}
Assume that there is no~$A\in \mathcal F$ satisfying the properties of the claim; notice that this implies $S\notin\mathcal F$. We need to prove that $\mathcal F'\cup \{S\}$ contains an induced copy of~$P$.

Since $\mathcal{F}$ is induced $P$-saturated, there exists an induced copy $P'$ of $P$ in $\mathcal{F}\cup \{S\}$ that contains~$S$. 
We shall prove that $\{f(F):F\in P', F\neq S\}\cup \{S\}$ is an induced copy of~$P$ in~$\mathcal F'\cup \{S\}$.

%Our aim is to show that the set $\{f(F):F\in P', F\neq S\}\cup \{S\}$ forms an induced copy of $P$ in $\mathcal{F}'\cup \{S\}$. 
First, observe that \eqref{eq:relations} implies the set~$\{f(F):F\in P',F\not=S\}$ is an induced  copy of the poset~$P'\setminus \{S\}$.
If for every $F\in P'\setminus \{S\}$ the relation (inclusion/incomparability) between $F$ and $S$ is the same as the one between $f(F)$ and $S$ then we are done.
 Thus it is enough to prove for every~$F\in P'\setminus \{S\}$ we have that
\begin{enumerate}[label=\rmlabel]
    \item if $S$ and $F$ are incomparable then $S$ and $f(F)$ are incomparable, \label{it:incompA} 
    \item if $S\subseteq F$ then $S\subseteq f(F)$ and \label{it:subA}
    \item if $S\supseteq F$ then $S\supseteq f(F)$. \label{it:supA}
\end{enumerate}
Notice that~\ref{it:subA} follows directly from~$F\subseteq f(F)$. 
It is easy to check that~\ref{it:incompA} also holds by recalling that~$i\in S$ and~$n+1\notin S$.
So finally, for $F\in P'\setminus \{S\}$ as in~\ref{it:supA}, observe that~$i\notin F$ otherwise $F$ would satisfy the properties of $A$ in the statement of the claim. 
Then $f(F)=F$ and~\ref{it:supA} holds. \qedclaim
\end{proofclaim}

The proof of the following claim is very similar. We include it for completeness.

\begin{claim}\label{claim:B}
Either $\mathcal{F'}\cup \{S\}$ contains an induced copy of $P$ or there exists an element $B\in\mathcal{F}$ such that $S^{\star}\subseteq B$ and $i\not\in B$.
\end{claim}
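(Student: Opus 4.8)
The plan is to mirror the proof of Claim~\ref{claim:A} almost verbatim, swapping the roles of ``contains $i$'' and ``avoids $i$'' and of subset/superset, while using $S^\star = S\setminus\{i\}$ in place of $S$ as the relevant test set. First I would assume that $\mathcal F'\cup\{S\}$ contains no induced copy of $P$ and that no element $B\in\mathcal F$ satisfies $S^\star\subseteq B$ and $i\notin B$; as in Claim~\ref{claim:A}, this assumption forces $S^\star\notin\mathcal F$, since $S^\star$ itself would be such a $B$ (note $i\notin S^\star$ by construction). Since $\mathcal F$ is induced $P$-saturated, there is an induced copy $P'$ of $P$ in $\mathcal F\cup\{S^\star\}$ containing $S^\star$.

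The core step is then to show that $\{f(F):F\in P',\,F\neq S^\star\}\cup\{S\}$ is an induced copy of $P$ in $\mathcal F'\cup\{S\}$, which contradicts our assumption and finishes the claim. By~\eqref{eq:relations}, $\{f(F):F\in P',\,F\neq S^\star\}$ is already an induced copy of $P'\setminus\{S^\star\}$, so it only remains to check that for each $F\in P'\setminus\{S^\star\}$ the relation between $S^\star$ and $F$ agrees with the relation between $S$ and $f(F)$. Concretely I would verify:
\begin{enumerate}[label=\rmlabel]
    \item if $S^\star$ and $F$ are incomparable then $S$ and $f(F)$ are incomparable,
    \item if $S^\star\supseteq F$ then $S\supseteq f(F)$, and
    \item if $S^\star\subseteq F$ then $S\subseteq f(F)$.
\end{enumerate}
Here item (ii) is immediate because $F\subseteq f(F)\subseteq F\cup\{n+1\}$, $S^\star\supseteq F$, and $n+1\notin S$ is irrelevant since actually we pass to $S\supseteq S^\star\supseteq F$ — more carefully, $S = S^\star\cup\{i\}$, so $S\supseteq S^\star\supseteq F$ and, as $n+1\notin f(F)$ unless $i\in F$, one checks $f(F)\subseteq S$ using $i\in S$. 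Item (iii) is the place where the hypothesis really bites: if $S^\star\subseteq F$, then $F$ cannot contain $i$, for otherwise $F\setminus(F\setminus\{i\})=\{i\}$ and, combined with $S^\star\subseteq F\setminus\{i\}$ being realized inside $\mathcal F$, one extracts two elements of $\mathcal F$ differing by exactly $\{i\}$ — the precise argument being the symmetric analogue of ``$i\notin F$ otherwise $F$ would be a valid $B$''. Thus $f(F)=F\supseteq S^\star$, and since we must compare with $S=S^\star\cup\{i\}$ we actually only get $S\subseteq f(F)$ when $i\in F$; so the genuinely needed observation is that the case $S^\star\subseteq F$ with $i\notin F$ is exactly the situation producing a forbidden $B$, hence excluded. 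Item (i) follows by a short case analysis using $i\in S$, $n+1\notin S$, and the containment $F\subseteq f(F)\subseteq F\cup\{n+1\}$, exactly as in~\ref{it:incompA} of Claim~\ref{claim:A}.

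The main obstacle, and the only genuinely delicate point, is getting the superset/subset bookkeeping exactly right in item (iii): one has to be careful that the ``test element'' inside $\mathcal F$ is $S^\star$ (not $S$), that $f(S^\star)=S^\star$ since $i\notin S^\star$, and that the element $B$ we are trying to rule out is characterized by $S^\star\subseteq B$ and $i\notin B$ — so any $F\in P'\setminus\{S^\star\}$ with $S^\star\subseteq F$ and $i\notin F$ would itself serve as $B$, contradiction; hence every such $F$ has $f(F)=F$ and behaves correctly, while every $F$ with $S^\star\subseteq F$ and $i\in F$ is handled because then $f(F)=F\cup\{n+1\}\supseteq F\supseteq S^\star$ and, using $i\in S\subseteq f(F)$, one still gets $S\subseteq f(F)$ precisely when $S^\star\cup\{i\}\subseteq F\cup\{n+1\}$, which holds. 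Once this is pinned down the rest is routine and the claim follows, so I would keep the write-up short, explicitly referring back to the corresponding verifications in the proof of Claim~\ref{claim:A} to avoid repetition.
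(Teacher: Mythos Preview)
Your approach is exactly the paper's: assume no such $B$ exists, deduce $S^\star\notin\mathcal F$, take an induced copy $P'\subseteq\mathcal F\cup\{S^\star\}$ through $S^\star$, and check that $\{f(F):F\in P',\,F\neq S^\star\}\cup\{S\}$ is an induced copy of $P$ in $\mathcal F'\cup\{S\}$ by verifying the three relation-preservation items.

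However, your handling of item~(iii) is garbled and at one point literally backwards. You write ``if $S^\star\subseteq F$, then $F$ cannot contain $i$'' and try to justify this via $F\setminus(F\setminus\{i\})=\{i\}$ together with the no-$\{i\}$-difference hypothesis on $\mathcal F$. That argument is invalid: $F\setminus\{i\}$ need not lie in $\mathcal F$, so you cannot extract a forbidden pair this way, and in any case the conclusion you want is the \emph{opposite}, namely that $F$ \emph{must} contain $i$. You do eventually reverse yourself (``the case $S^\star\subseteq F$ with $i\notin F$ is \dots\ excluded''), which is the correct statement, but the paragraph as written is self-contradictory. The clean argument, as in the paper, is: if $S^\star\subseteq F$ and $i\notin F$, then $F$ itself is a valid $B$, contradicting our standing assumption; hence $i\in F$, so $f(F)=F\cup\{n+1\}$ and $S=S^\star\cup\{i\}\subseteq F\subseteq f(F)$. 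Similarly, your item~(ii) should simply note that $S^\star\supseteq F$ forces $i\notin F$ (since $i\notin S^\star$), whence $f(F)=F\subseteq S^\star\subseteq S$; the detour through ``$n+1\notin f(F)$ unless $i\in F$'' is unnecessary. Once these two items are straightened out, your proof matches the paper's line for line.
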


\begin{proofclaim}
Assume that there is no  $B\in\mathcal{F}$ satisfying the properties of the claim; notice that this implies $S^{\star}\notin\mathcal{F}$. It remains to show that $\mathcal{F}'\cup \{S\}$ contains an induced copy of $P$.

Since $\mathcal{F}$ is induced $P$-saturated, there exists an induced copy $P'$ of $P$ in $\mathcal{F}\cup \{S^{\star}\}$ that contains~$S^{\star}$.
We shall prove that $\{f(F):F\in P', F\not=S^{\star}\}\cup \{S\}$ is an induced copy of $P$ in~$\mathcal F'\cup \{S\}$.

First, observe that \eqref{eq:relations} implies the set~$\{f(F):F\in P',F\not=S^{\star}\}$ is an induced copy of the poset~$P'\setminus \{S^{\star}\}$.
If for every $F\in P'\setminus \{S^{\star}\}$ the relation (inclusion/incomparability) between $F$ and $S^{\star}$ is the same as the one between $f(F)$ and $S$ then we are done.
Thus, for every~$F\in P'\setminus \{S^{\star}\}$ we must prove that
\begin{enumerate}[label=\rmlabel]
    \item if $S^{\star}$ and $F$ are incomparable then $S$ and $f(F)$ are incomparable, \label{it:incompB} 
    \item if $S^{\star}\supseteq F$ then $S\supseteq f(F)$ and \label{it:supB}
    \item if $S^{\star}\subseteq F$ then $S\subseteq f(F)$. \label{it:subB}
\end{enumerate}
For~\ref{it:supB}, notice that $S^{\star}\supseteq F$ implies $i\not\in F$ and thus $S\supseteq S^{\star}\supseteq F=f(F)$. 
It is easy to check that~\ref{it:incompB} also holds by recalling that~$i\in S$ and~$n+1\notin S$.
Finally, for $F\in P'\setminus \{S^{\star}\}$ as in~\ref{it:subB}, observe that~$i\in F$ otherwise $F$ would satisfy the properties of $B$ in the statement of the claim. 
Then $f(F)=F\cup\{n+1\}$ and~\ref{it:subB} holds. \qedclaim
\end{proofclaim}

To finish the proof of this case, suppose that $\mathcal{F'}\cup \{S\}$ does not contain an induced copy of $P$.
By Claims~\ref{claim:A} and~\ref{claim:B} there are elements $A,B\in \mathcal F$ such that $A\subseteq S$, $i\in A$ and $S\setminus \{i\}\subseteq B$, $i\not\in B$. 
In particular, we have $A\setminus B=\{i\}$, a contradiction.

\smallskip
\textit{{{\hskip 0.7em Fourth case: $i\notin S$ and $n+1\in S$.}}}
Let $S':=(S\setminus \{n+1\}) \cup \{i\}$. Recall that every element of $\mathcal F'$ either contains both $i$ and $n+1$ or neither of them. This property ensures that 
\begin{enumerate}[label=\alabel]
\item 
$S' \not \in \mathcal F'$;
\item 
for any $A \in \mathcal F'$, $A \subseteq S$ if and only if $A \subseteq S'$; \label{22}
\item for any $A \in \mathcal F'$, $S \subseteq A$ if and only if $S' \subseteq A$. \label{33}
\end{enumerate}
By the third case we have that $\mathcal F'\cup \{S'\}$ contains an induced copy of $P$. Clearly~\ref{22} and~\ref{33} then imply that 
$\mathcal F'\cup \{S\}$ contains an induced copy of $P$.
\end{proof}

\subsection{Proof of Theorems~\ref{linearthm} and~\ref{xythm}} 
\newparagraph

Given a poset $P$ with legs and an induced~$P$-saturated family $\mathcal F\subseteq 2^{[n]}$, the following lemma already implies that $\mathcal F$ has size at least~$n$.

\begin{lemma}\label{lem:legs}
Let $P$ be a poset with legs and~$\mathcal F\subseteq 2^{[n]}$ be an induced~$P$-saturated family.
Then, there is an injective function~$f\colon [n]\longrightarrow \mathcal F\setminus \{\emptyset\}$ such that 
\[
    f(i) = \begin{cases}
        \{i\}   &\text{if }\{i\}\in \mathcal F \text{ and} \\
        H(i)       &\text{if }\{i\}\notin \mathcal F, \text{ where $H(i)$ is the hip of an induced copy of $P$ in $\mathcal F\cup \{\{i\}\}$}.
    \end{cases}
\]
\end{lemma}

\begin{proof}
%The proof is similar to the one used to prove Lemma 5 in \cite{fkkm}. 
Let $\mathcal{F}$ be an induced $P$-saturated family.
For~$\{i\}\notin \mathcal F$ we shall choose a suitable $f(i)=H(i)$ in such a way that~$f$ is injective.
Given~$i\in [n]$ with~$\{i\}\notin \mathcal F$, let~$P'$ be an induced copy of~$P$ in~$\mathcal{F}\cup\{\{i\}\}$ that contains $\{i\}$.
Observe that~$\{i\}$ can only be a leg of $P'$.
Among all possible choices for $P'$, we pick it under the following conditions.
\begin{enumerate}[label=\rmlabel]
    \item The leg $L'$ of~$P'$ which is not~$\{i\}$ is taken so that $|L'|$ is as large as possible; \label{it:ymax}
    \item under \ref{it:ymax}, the hip $H'$ of $P'$ is taken so that $|H'|$ is as small as possible. \label{it:mmin}
\end{enumerate}
Thus we define $f(i):=H'$. Note that since~$L'$ and~$\{i\}$ are incomparable, $i\notin L'$.

\begin{claim}\label{claim1}
$H'=L'\cup \{i\}$.
\end{claim}

\begin{proofclaim}
Suppose for a contradiction that $L'\cup \{i\}\subsetneq H'$. 
If $L'\cup \{i\}\in \mathcal{F}$ then the poset~$(P'\cup \{L'\cup \{i\}\}) \setminus \{H'\}$ is an induced copy of~$P$, which contradicts~\ref{it:mmin}.
Therefore $L'\cup \{i\}$ is not in $\mathcal{F}$ and so $\mathcal{F}\cup \{ L'\cup \{i\} \}$ contains an induced copy $P''$ of  $P$ which uses the set $L'\cup \{i\}$.

Denote the legs of~$P''$ by~$L_1'',L_2''$.
If $L'\cup \{i\}$ is not a leg in $P''$ then~$L_1'', L_2''\subset L'\cup \{i\}$.
This implies that $(P'\cup \{L_1'', L_2''\}) \setminus\{\{i\}, L'\}$ is an induced copy of  $P$ in $\mathcal{F}$, a contradiction. 
Thus, $L'\cup \{i\}$ is a leg, and we may assume $L_1''=L'\cup \{i\}$. 

Note that if $L'$ and $L_2''$ are incomparable, then $(P''\cup \{L'\})\setminus \{L_1''\}$ would be an induced copy of $P$ in~$\mathcal F$, a contradiction.
Thus, $L'$ is comparable with~$L_2''$.
In particular~$L'\subset  L_2''$, otherwise we would have that $L_2'' \subseteq L'\cup \{i\}=L_1''$, which contradicts the fact that~$L_1''$ and~$L_2''$ are incomparable.
Moreover, again because~$L_1''$ and~$L_2''$ are incomparable, we have that~$i\notin L_2''$.

Finally, observe that~$(P''\cup \{\{i\}\})\setminus \{L_1''\}$ is an induced copy of $P$ in~$\mathcal F \cup \{\{i\}\}$ with legs~$\{i\}$ and~$L_2''\supset L'$, which contradicts~\ref{it:ymax}.\qedclaim
\end{proofclaim}

Recall  $f(i):=H'= H(i)$.
We shall prove that $f$ is injective. Suppose not, namely $f(i)=f(j)$ for some $i\not=j$.
If either $\{i\}\in\mathcal{F}$ or $\{j\}\in\mathcal{F}$ then we get a contradiction.
Thus, we have~$\{i\}, \{j\}\notin \mathcal F$, and hence there exist induced copies $P'\subseteq \mathcal{F}\cup\{\{i\}\}$ and $P''\subseteq \mathcal{F}\cup\{\{j\}\}$  of $P$ such that $H$ is the hip of both and~$f(i)=f(j)=H$.
Say $\{i\}$ and $L'$ are the legs of $P'$ while $\{j\}$ and $L''$ are the legs of $P''$. 
Because of Claim~\ref{claim1} we have that~$H=L'\cup \{i\}=L''\cup \{j\}$, and hence~$L'$ and~$L''$ are incomparable.
This implies that $(P'\cup\{ L''\})\setminus \{\{i\}\}$ is an induced copy of $P$ in $\mathcal{F}$, a contradiction.
\end{proof}

Now we are ready to prove Theorem~\ref{linearthm}.

\begin{proof}[Proof of Theorem~\ref{linearthm}]
Observe that for a poset $P$ with legs, the inequality~$\sat^*(n,P)\geq n+1$ follows directly by applying Lemma~\ref{lem:legs} and by noticing that the empty set is in every induced~$P$-saturated family.

For $P$ such that $P$ and~$\overline P$ have legs, we proceed as follows.
Let~$\mathcal F \subseteq 2^{[n]}$ be an induced $P$-saturated family and observe that~$\emptyset,[n]\in\mathcal F$.
The poset~$\overline {\mathcal F} \subseteq 2^{[n]}$ is induced $\overline P$-saturated and therefore we can apply Lemma~\ref{lem:legs} to both~$\mathcal F$ and $\overline{\mathcal F}$ to obtain injective functions~$f$ and $\overline f$ respectively. 
Since both $P$ and $\overline P$ have legs, it is easy to see that the hip of $P$ is not a maximal element of $P$.
It follows that $f(i)\neq [n]$ for every~$i\in [n]$. 
That is, $f([n]) \subseteq \mathcal F\setminus \{\emptyset, [n]\}$; similarly $\overline f([n]) \subseteq \overline{\mathcal F}\setminus \{\emptyset, [n]\}$.

By taking~$[n]\setminus\overline f(i)$ for every~$i\in [n]$ we define a new injective function~$\tilde f$ from $[n]$ to~$\mathcal F\setminus \{\emptyset, [n]\}$:
\[
    \tilde f(i) := \begin{cases} 
    [n]\setminus\{i\}  &\text{if }\{i\}\in \overline{\mathcal F}, \text{ and} \\
    H(i) &\text{if }\{i\}\notin \overline{\mathcal F}, 
    \text{ where $[n]\setminus H(i)$ is the hip of an induced~$\overline P$ in~$\overline {\mathcal F}\cup \{\{i\}\}$}\,.
    \end{cases}
\]

% \[
%     \tilde f(i) := \begin{cases} 
%     [n]\setminus\{i\}  &\text{if }[n]\setminus\{i\}\in \mathcal F, \text{ and} \\
%     H(i) &\text{if }[n]\setminus\{i\}\notin \mathcal F, 
%     \text{ where $[n]\setminus H(i)$ is the hip of an induced copy of~$\overline P$ in~$\overline {\mathcal F}\cup \{\{i\}\}$}\,.
%     \end{cases}
% \]

Now, since~$f$ and~$\tilde f$ are injective, proving that~$f([n])\cap \tilde f([n])=\emptyset$ yields that $\vert \mathcal F\vert \geq 2n+2$ by recalling that the sets $\emptyset$ and~$[n]$ belong to~$\mathcal F$.

Suppose~$f(i)=\tilde f(j)$ for~$i,j\in [n]$. 
If~$\{i\}\in \mathcal F$ or~$[n]\setminus\{j\}\in \mathcal F$ then it is easy to check that such an identity is not possible. 
Hence, we have that~$f(i)=\tilde f(j)=H$ where: $H$ is the hip of an induced copy $P_1$  of $P$ in $\mathcal F\cup \{\{i\}\}$ and $[n] \setminus H$ is the hip of an induced copy $\overline P_2$ of $\overline P$  in $\overline { \mathcal F }\cup \{\{j\}\}$. In particular, this means
that there is an induced copy $P_2$ of $P$ in 
 $ { \mathcal F }\cup \{[n]\setminus \{j\}\}$ where $[n]\setminus \{j\}$ plays the role in $P_2$ of one of the two maximal elements of $P$; $H$ is in $P_2$ and dominates all elements of $P_2$ except for the two maximal elements.

Let $L^1$ be the leg of $P_1$ other than $\{i\}$.
Let~$L_1^2,L_2^2\in \mathcal F$ be the legs of~$P_2$; in particular, $L_1^2$ and~$L_2^2$ are incomparable and $L_1^2, L_2^2\subset H$. This implies that 
 for every~$A\in P_1\setminus \{L^1, \{i\}\}$, $L_1^2, L_2^2 \subset A $.
Thus, $(P_1\cup \{L_1^2, L_2^2\})\setminus  \{L^1, \{i\}\}$ is an induced copy of~$P$ in~$\mathcal F$, a contradiction.
\end{proof}

Theorem~\ref{xythm} now follows by a simple application of Lemma~\ref{lem:legs} and Theorem~\ref{linearthm}, together with two upper bound constructions.

\begin{proof}[Proof of Theorem~\ref{xythm}]

Observe that~$\sat^*(n,Y)=\sat^*(n,\inv{Y})$ where $\inv{Y}:=\overline{Y}$. 
Let~$\mathcal F\subseteq 2^{[n]}$ be an induced $\inv{Y}$-saturated family. 
Since~$\inv{Y}$ has legs, Lemma~\ref{lem:legs} yields an injective function~$f$ from $[n]$ to~$\mathcal F\setminus \{\emptyset\}$ with $f(i) =\{i\}$ if $\{i\}\in \mathcal F$ and $f(i)=H$ otherwise, where $H$ is the hip of an induced copy of $\inv{Y}$ in $\mathcal F\cup \{\{i\}\}$. 
This already implies $|\mathcal F|\geq n+1$ as $\emptyset \in \mathcal F$.

Observe that~$f(i)\neq [n]$ for every~$i\in [n]$, therefore, if~$[n]\in \mathcal F$ then $|\mathcal F|\geq n+2$, as desired.
Hence, assume that $[n]\notin \mathcal F$, which means that~$\mathcal F\cup\{[n]\}$ contains an induced copy of~$\inv{Y}$ that uses~$[n]$. 
Let $L_1,L_2$ be the legs of this copy of $\inv{Y}$ and $H$ be the hip.
Assume there is an $i\in [n]$ such that~$f(i)=H$; so $H$ is the hip of an induced copy of~$\inv{Y}$ contained in~$\mathcal F \cup \{\{i\}\}$. 
Let~$M$ be the maximal element of this copy of $\inv{Y}$ and observe that $\{L_1,L_2,H,M\}$ forms an induced copy of~$\inv{Y}$ in~$\mathcal F$, a contradiction.
Hence, $f(i)\neq H$ for every $i\in [n]$, which means that~$H$ was not counted before and therefore~$\vert \mathcal F\vert \geq n+2$. This argument implies that $\sat^*(n,Y)\geq n+2$.

For the poset~$X$ observe that~$X$ and $\overline X=X$ have legs, therefore Theorem~\ref{linearthm} directly implies that~$\sat^*(n,X)\geq 2n+2$. 
For the upper bounds, consider the posets 
$$P:=\{F\in 2^{[n]}\colon \vert F\vert\geq n-1 \text{ or } F=\emptyset\} \quad \text{and}\quad Q:=\{F\in 2^{[n]}\colon \vert F\vert\geq n-1 \text{ or } \vert F\vert \leq 1\}\,,$$
and notice that they are respectively induced $Y$-saturated and induced $X$-saturated. 
Furthermore, $\vert P\vert= n+2$ and~$\vert Q\vert = 2n+2$. 
\end{proof}

We conclude this subsection by exhibiting a class of posets for which the bound in Theorem~\ref{linearthm} is sharp up to an additive constant.

For any $\ell\in\mathds N$, let $\wedge_\ell$ denote the poset with the following properties:
\begin{itemize}
    \item[-] $\wedge_\ell$ has legs $L_1,L_2$ and hip $H_1$;
    \item[-] $\wedge_\ell\setminus\{L_1,L_2\}=\{H_1,\dots,H_\ell\}$ where $H_j\subset H_{j+1}$ for every $j\in[\ell-1]$. 
\end{itemize}
Similarly, for any $\ell\in\mathds N$, let $X_\ell$ denote the poset with the following properties:
\begin{itemize}
    \item[-] $X_\ell$ has legs $L_1',L_2'$ and 
     $X_\ell\setminus\{L_1',L_2'\}=\vee_\ell$ where  $\vee_\ell :=\overline{\wedge_\ell}$.
\end{itemize}
Clearly $X_1=X$ and~$\wedge_2=\inv{Y}$.
Moreover, $\wedge_\ell, X_\ell$ and $\overline X_\ell$ have legs for every~$\ell\in \mathds N$, and therefore Theorem~\ref{linearthm} implies that $\text{sat}^*(n,\wedge_\ell)\geq n+1$ and $\text{sat}^*(n,X_\ell)\geq 2n+2$. 
The next proposition states that these bounds are close to the exact values of $\text{sat}^*(n,\wedge_\ell)$ and $\text{sat}^*(n,X_\ell)$.

\begin{proposition}\label{upperbounds}
For all integers $n-1>\ell\geq 2$ we have,\footnote{Note that the case $\ell=1$ is covered by Theorem~\ref{xythm} since $\wedge_2=\inv{Y}$ and $X_1=X$.}
\begin{alignat*}{3}
    n+1 &\leq\text{sat}^*(n,\wedge_{\ell+1}) &&\leq n+2^{\ell+1}-\ell-1 \text{ and}  \\
    2n+2 &\leq\text{sat}^*(n,X_\ell)        &&\leq 2n+2^{\ell+1}-2\ell.
\end{alignat*}
\end{proposition}
\begin{proof}
First, we consider $\wedge_{\ell+1}$. Let $\mathcal{F}\subseteq 2^{[n]}$ be the family containing precisely the following sets:
\begin{itemize}
    \item[-] the empty set $\emptyset$;
    \item[-] $\{i\}$ for every~$i\in [n]$;
    \item[-] all subsets of $\{1,2,\dots,\ell\}$;
    \item[-] all proper supersets of $[n]\setminus\{1,2,\dots,\ell\}$.
\end{itemize}
It is straightforward to check that $\mathcal{F}$ has $n+2^{\ell+1}-\ell-1$ elements and  is induced $\wedge_{\ell+1}$-saturated. 
Similarly, the family $\mathcal F':=\mathcal{F}\cup\overline{\mathcal{F}}\subseteq 2^{[n]}$ has $2n+2^{\ell+1}-2\ell$ elements and  is induced $X_\ell$-saturated. 
\end{proof}

\section{Concluding remarks}\label{sec:conc}

The following example shows that the bound in Lemma~\ref{digraph} is essentially tight.

\begin{example}\label{ex:barrier}
Let $n\in\mathds N$ be a perfect square, i.e., $\sqrt{n}\in\mathds N$. Let $A_s, B_t\subseteq[n]$ be defined as follows.
\begin{itemize}
   \item $A_s:=\{s\sqrt{n}+1,s\sqrt{n}+2,\dots,s\sqrt{n}+\sqrt{n}\}$ for every $s\in[\sqrt{n}-1]\cup\{0\}$;
   \item $B_t:=[n]\setminus\{t,t+\sqrt{n},t+2\sqrt{n},\dots,t+(\sqrt{n}-1)\sqrt{n}\}$ for every $t\in[\sqrt{n}]$.
\end{itemize}
Let $\mathcal{F}^{\star}:=\{A_s:s\in[\sqrt{n}-1]\cup\{0\}\}\cup\{B_t:t\in[\sqrt{n}]\}\subseteq 2^{[n]}$. 

Observe that $\mathcal{F}^{\star}$ has $2\sqrt{n}$ elements. Furthermore, for every $i\in[n]$, there exists exactly one pair of elements $A,B\in\mathcal F^{\star}$ such that $A\setminus B=\{i\}$. Namely, if $i=s\sqrt{n}+t$ where $s\in[\sqrt{n}-1]\cup\{0\}$ and $t\in[\sqrt{n}]$ then $A_s\setminus B_t=\{i\}$.
\end{example}

Note that the digraph $D$ with $V(D)=\mathcal F^{\star}$   and edge set~$E(D)=\{\diedge{AB}:A\setminus B=\{i\},i\in[n]\}$ is precisely the balanced oriented bipartite graph (i.e., an extremal example for Theorem~\ref{turanthm}).

This example shows that  if one can improve the lower bound in Theorem~\ref{mainthm} by using the auxiliary digraph approach, then one will really need to use the fact that the digraph is generated by an induced $P$-saturated family (recall this was not assumed in the statement of Lemma~\ref{digraph}). On the other hand, if Theorem~\ref{mainthm} is close to being best possible, then Example~\ref{ex:barrier} points in the direction of potential extremal examples. That is, is there some poset $P$ such that there is a minimum induced $P$-saturated family $\mathcal F \subseteq 2^{[n]}$ that is `close' to the family $\mathcal F^{\star}$ in Example~\ref{ex:barrier}?

\smallskip

Another natural question is to characterise those posets $P$ for which $\text{sat}^*(n,P)$ is bounded by a constant. Observe that Lemma~\ref{blowup} provides a method for determining such posets. That is, if one can exhibit an $n_0 \in \mathbb N$ and an induced $P$-saturated family $\mathcal F\subseteq2^{[n_0]}$ such that for some $i\in[n_0]$ there are no elements $A,B\in\mathcal F$ with $A\setminus B=\{i\}$, then $\text{sat}^*(n,P)=O(1)$.

Along the lines of this research direction, Keszegh, Lemons, Martin, P\'alv\"olgyi and Patk\'os \cite{klmpp} conjectured the following. Given a poset $P$ on $[p]$, let $\dot P$ denote the poset on $[p+1]$ where $\dot P:=P\cup\{[p+1]\}$ (i.e., $\dot P$ is obtained by adding an element to $P$ which dominates all elements in $P$).
\begin{conjecture}\label{conj:dot}\cite{klmpp}
$\text{sat}^*(n,P)=O(1)$ if and only if $\text{sat}^* (n,\dot P)=O(1)$.
\end{conjecture}
Note that neither direction of Conjecture~\ref{conj:dot} has been verified except for some special cases (see \cite[Theorem~3.6]{klmpp}).
It would be interesting to investigate if Lemma~\ref{blowup} can help tackle Conjecture~\ref{conj:dot}.

\smallskip

Finally, it is natural to consider induced saturation problems for families of posets. Given a family of posets $\mathcal P$, we say that $\mathcal F\subseteq 2^{[n]}$ is \emph{induced $\mathcal P$-saturated} if $\mathcal F$ contains no induced copy of any poset $P\in\mathcal P$ and for every $S\in 2^{[n]}\setminus \mathcal F$ there exists an induced copy of some poset $P\in\mathcal P$ in $\mathcal F\cup\{S\}$. We denote the size of the smallest such family by $\text{sat}^*(n,\mathcal P)$. By following the proof of Theorem~\ref{mainthm} precisely, one obtains the following result.

\begin{theorem}\label{LBfamilies}
For any family of posets $\mathcal P$, either there exists a constant $K_{\mathcal P}$ with $\text{sat}^*(n, \mathcal P) \leq K_{\mathcal P}$ or $\text{sat}^*(n, \mathcal P) \geq \min \{ 2 \sqrt{n}, n/2+1 \}$, for all $n \in \mathds N$. 
\end{theorem}
In light of Theorem~\ref{LBfamilies} it is natural to ask whether an analogue of Conjecture~\ref{conj1} is true in this more general setting, or whether (for example) the lower bound on $\text{sat}^*(n, \mathcal P)$ in Theorem~\ref{LBfamilies} is best possible up to a multiplicative  constant. 

%%%%%%%%%%%%%%%%%%%%%%%%%%%%%%%%%%%%%%%%%%%%%%%%%%%
%%%%%%%%%%%%%%%%%%%%%%%%%%%%%%%%%%%%%%%%%%%%%%%%%%%

% ACKNOWLEDGEMENTS
% Include acknowledgements to colleagues and referee here.
% Funding and grant support should appear in footnotes on the front page, using the 
% thanks command in the authors command (see above).

\section*{Acknowledgements}

The authors are grateful to the referees for their helpful and careful reviews. We are also grateful to Louis DeBiasio for pointing out a modification of our original proof of Theorem~\ref{turanthm} which prevents the appearance of a constant error term in the statement of the theorem.

\medskip

{\noindent \bf Data availability statement.}
There are no additional data beyond that contained within the main manuscript.

\end{document}